\theoremstyle{plain}
\newtheorem*{thm*}{Theorem}
\newtheorem{thm}{Theorem}[section]
\newtheorem*{lem*}{Lemma}
\newtheorem{lem}[thm]{Lemma}
\theoremstyle{definition}
\newtheorem*{eg*}{Example}
\newtheorem*{egs*}{Examples}
\newtheorem*{def*}{Definition}
\theoremstyle{remark}
\newtheorem*{rmk*}{Remark}
\newtheorem*{rmks*}{Remarks}
\def\pa{\partial}
\def\Re{ \mathrm{Re}}
\def\erfc{\mathrm{erfc}}
\def\C{\mathbb{C}}
\def\E{\mathbf{E}}
\def\P{\mathbf{P}}
\def\R{\mathbb{R}}
\def\fR{\mathbf{R}}
\def\L{\mathbb{L}}
\def\fR{\mathbf{R}}
\newcommand{\1}{{\mathbf{1}}}
\begin{document}
\title[]{Edge scaling limit of the spectral radius for random normal matrix ensembles at hard edge}

%%%%%%%%%%%%%%%%%%%%%%%%%%%%% author %%%%%%%%%%%%%%%%%%%%%%%%%%%%
\author{Seong-Mi Seo}
\address{Seong-Mi Seo\\
School of Mathematics\\
Korea Institute for Advanced Study\\
85 Hoegiro\\
Dongdaemun-gu\\
Seoul 02455\\
Republic of Korea}
\email{seongmi@kias.re.kr}

%\thanks{The author was supported by Samsung Science and Technology Foundation(SSTF-BA1401-01) and K}
%%%%%%%%%%%%%%%%%%%%%%%%%%%%% author %%%%%%%%%%%%%%%%%%%%%%%%%%%%

%\date{\today}

%%%%%%%%%%%%%%%%%%% MSC, Key words and phrases %%%%%%%%%%%%%%%%%%
\subjclass[2010]{60B20; 60G55; 82B21}
\keywords{Random normal matrices; Hard edge; Spectral radius; Universality}
%%%%%%%%%%%%%%%%%%% MSC, Key words and phrases %%%%%%%%%%%%%%%%%%

%%%%%%%%%%%%%%%%%%%%%%%%%% abstract %%%%%%%%%%%%%%%%%%%%%%%%%%
\begin{abstract}
We investigate a random normal matrix model with eigenvalues forced to be in the droplet, the support of the equilibrium measure associated with an external field. For radially symmetric external fields, we show that the fluctuations of the spectral radius around a hard edge tend to follow an exponential distribution as the number of eigenvalues tends to infinity. As a corollary, we obtain the order statistics of the moduli of eigenvalues.

\end{abstract}
%%%%%%%%%%%%%%%%%%%%%%%%%% abstract %%%%%%%%%%%%%%%%%%%%%%%%%%

\maketitle

\section[]{Introduction and results}

In random matrix theory, there have been numerous studies of the spectral radius of large size matrices. The limiting distributions of the largest eigenvalue of classical random matrix ensembles, Gaussian orthogonal, unitary, and symplectic ensembles, were studied by Forrester \cite{MR1236195} and Tracy-Widom \cite{MR1257246, MR1385083}. More generally, a type of universality for Wigner random matrices was proved by Soshnikov \cite{MR1727234}. The study of the scaling limit of correlation functions at the edge of the spectrum led the universality of the largest eigenvalue distribution for some invariant ensembles \cite{MR2306224}. 

The edge behavior of the spectrum of a random normal matrix is different from that of a random hermitian matrix, which is expressed in terms of Painlev\'{e} II. 
In the random normal matrix model, one considers random normal matrices of size $n$ with a probability measure of the form
\begin{align}\label{ProbM}
	d\mathcal{P}_n(M)=\frac{1}{\mathcal{Z}_n} e^{-n \mathrm{tr} Q(M)} dM,
\end{align}
where $Q:\C \to \R\cup \{+\infty\}$ is an external potential,
$dM$ is the surface measure on complex-valued $n \times n$ matrices $M$ with $M^*M = MM^*$ induced from the standard metric on $\C^{n\times n}$, and $\mathcal{Z}_n$ is a normalizing constant. In the presence of an external potential which grows sufficiently fast at infinity,
the eigenvalues accumulate on a compact set called the droplet as the size of matrix $n$ goes to infinity.
In the case of $Q(z)=|z|^2$, the system of eigenvalues is represented by the well-known complex Ginibre ensemble \cite{MR173726} and the droplet is the closed unit disk. With a proper scaling, the limit law of its spectral edge  follows the Gumbel distribution \cite{MR1986426}. Also, this result has been generalized to a class of radially symmetric potentials in \cite{MR3215627}.  

In this paper, we study random normal matrix ensembles with a hard edge. By localizing a potential to the droplet, we obtain a system of eigenvalues contained completely in the droplet. 
The growth of local droplets associated with a potential localized to subsets was studied in \cite{MR3056295} in connection with the two-dimensional Coulomb gas and Laplacian growth. 
The hard edge Ginibre ensemble, defined by localizing the potential $Q(z)=|z|^2$ to the unit disk so that all eigenvalues are constrained to range in the disk, was studied in \cite{MR3975882}. For more general potentials, edge behaviors of hard edge ensembles were investigated in \cite{MR4030288} for both regular and singular boundary points, e.g., cusps or double points. We also mention \cite{MR655364} for an earlier work on this type of hard edge ensembles.

We study limiting behaviors of the spectral radius for power potentials beyond the Ginibre case by finding a suitable rescaling factor and giving a proof based on the central limit theorem. We also prove the universality of this edge behavior for a class of radially symmetric potentials by applying the Laplace method from \cite{MR3215627}.

\subsection{The random normal matrix ensemble}
We consider an external potential $Q: \C\to \R\cup \{+\infty\}$ which is lower semi-continuous. We assume that $Q<+\infty$ on a set of positive area and $Q$ satisfies the growth condition
\begin{align}\label{Q:gr}
	\liminf_{z\to\infty} \frac{Q(z)}{\log |z|}>2.
\end{align}
In the random normal matrix model with potential $Q$, defined by the probability measure \eqref{ProbM}, the system of eigenvalues admits the joint probability density 
\begin{align}\label{eigen:pdf}
	P_{n}(z_1,\cdots,z_n)=\frac{1}{Z_n} \prod_{j\ne k}  |z_j -z_k| \,e^{-n \sum_{j=1}^{n} Q(z_j)}
\end{align}
with the normalization constant
\begin{align*}
Z_n = \int_{\C^n}  \prod_{j\ne k}  |z_j -z_k| \,e^{-n \sum_{j=1}^{n} Q(z_j)} \prod_{j=1}^n dA(z_j),
\end{align*}
where $dA(z)=dxdy/\pi$ is the normalized area measure on $\C$.
We refer to \cite{MR1643533, MR2172690, MR1986427} for earlier works on the random normal matrix model and its correlation structure.

It is well known that the eigenvalues $\{ z_j \}_{1}^{n}$ form a determinantal point process on $\C$, which means that for integers $k=1,\cdots,n$, its $k$-point correlation function 
\begin{align*}
\fR_{n,k}(z_1,\cdots,z_k) = \frac{n!}{(n-k)!}\int_{\C^{n-k}}P_n(z_1,\cdots, z_n) \, dA(z_{k+1})\cdots dA(z_{n})
\end{align*}
is expressed as a determinant 
\begin{align*}
\fR_{n,k}(z_1,\cdots, z_k) = \det \left[K_n(z_i,z_j)\right]_{i,j=1}^{k}
\end{align*}
with the kernel 
\begin{align*}
	K_n(z,w):= \sum_{k=0}^{n-1} p_k(z) \overline{p_k(w)} e^{-n (Q(z) + Q(w))/2},
\end{align*}
where $p_k$ is an orthonormal polynomial of degree $k$ with respect to the inner product 
\begin{align}\label{inner}
	\langle p,q\rangle_{nQ} := \int_{\C} p(z) \overline{q(z)} e^{-n Q(z)} dA(z).
\end{align}
This correlation structure described in terms of the reproducing kernel for the space of analytic polynomials with the inner product in \eqref{inner} plays an important role in the study of random normal matrix ensembles for global statistics including convergence of fluctuations of the spectral measure to gaussian fields \cite{MR2817648} and 
local statistics in the bulk and at the edge. In particular, on a local scale it is known that Ginibre kernel $$e^{z\bar{w}-|z|^2/2-|w|^2/2}$$ appears in the bulk of the spectrum. See  \cite[Section 7.5]{MR2817648} and \cite{MR3903320, MR1794066}. At a point on the boundary of the spectrum, the system of (suitably rescaled) eigenvalues converges to the determinantal point field with the correlation kernel 
 $$F(z+\bar{w})\,e^{z\bar{w}-|z|^2/2-|w|^2/2},$$ 
where $F$ is the free boundary plasma function $$F(z) = \frac{1}{2}\,\erfc\left(\frac{z}{\sqrt{2}}\right) = \frac{1}{\sqrt{2\pi}}\int_{-\infty}^{0}e^{-(z-t)^2/2}dt.$$ See \cite{MR3975882, 2017arXiv171006493H}.

\subsection{Droplets and potential theory}
With the growth assumption \eqref{Q:gr} on $Q$, the eigenvalues of random normal matrices are constrained to stay on a compact subset of $\C$. This confinement can be explained by the logarithmic potential theory. 

%More precisely, following [ST], we call a potential $Q$ admissible if $Q(z): \C \to (-\infty, \infty]$ satisfies the following conditions:
%\begin{enumerate} \item[(i)] $Q$ is lower continuous. \item[(ii)] $Q(z)<\infty$ on a set of positive capacity.\item[(iii)] $Q(z) - 2\log|z| \to \infty$ as $ |z|\to +\infty $. \end{enumerate}

Following \cite{MR1485778}, we define the weighted logarithmic energy $I_{Q}(\mu)$ of a probability measure $\mu$ on $\C$ by
\begin{align*}
	I_{Q}(\mu)=\iint_{\C^2} \log \frac{1}{|z-\zeta|} d\mu(\zeta)d\mu(z) + \int_{\C} Q  \,d\mu.
\end{align*}
For a fixed potential $Q$, there is a unique probability measure $\sigma_Q$ which minimizes the weighted logarithmic energy among all compactly supported Borel probability measures on $\C$. The minimizer $\sigma_Q$ is called Frostman's equilibrium measure associated with $Q$. We set the logarithmic potential $U_Q(z)$ of $\sigma_Q$
\begin{displaymath}
	U_{Q}(z)= \int \log \frac{1}{|z-\zeta|} \, d\sigma_{Q}(\zeta)
\end{displaymath}
and the modified Robin constant
\begin{displaymath}
	F_Q= I_Q(\sigma_Q)-\frac{1}{2}\int Q\, d\sigma_{Q}.
\end{displaymath}
Then, the equilibrium measure has the following properties \cite{MR1485778}:
\begin{enumerate}
\item[(i)] $S= \mathrm{supp}(\sigma_Q)$ is compact and has positive capacity.
\item[(ii)]$2U_{Q}(z)+ Q(z) = 2F_Q $ holds for quasi-every $z$ in $S$.
\item[(iii)] If $Q$ is smooth in a neighborhood of $S$, then $d\sigma_Q= \1_{S} \, \pa \bar{\pa} Q \, dA$.
\end{enumerate}
Here, we write $\pa = \frac{1}{2}(\frac{\pa}{\pa x}-i\frac{\pa}{\pa y})$ for $z=x+i y$ and 
$\Delta = 4\pa \bar{\pa}= \frac{\pa^2}{\pa x^2}+ \frac{\pa^2}{\pa y^2}$. We call $S = S_Q$ the droplet in external potential $Q$. It is known that the eigenvalues of the random normal matrix ensemble tend to condensate to the droplet as $n$ goes to infinity. More precisely, the empirical distribution 
$\frac{1}{n}\sum_{j=1}^{n}\delta_{z_j}$ of the eigenvalues $z_j$ converges weakly to the equilibrium measure $\sigma_Q$. See \cite{MR3056295, MR1606719}.

\subsection[]{Main results}
We localize the external potential $Q$ to the droplet $S$ and write 
\begin{displaymath}
	Q^S (z)= 
	\begin{cases}
		Q(z),& \quad z\in S \\
		+\infty,& \quad z \in S^c.
	\end{cases}
\end{displaymath}
There still exists the equilibrium measure $\sigma_{Q^S}$ associated with $Q^S$ and it holds that $\sigma_{Q^S}= \sigma_{Q}$. For more details, see \cite[Section 5]{MR3056295}. 

%In particular, $\sigma_{Q^S}$ is also absolutely continuous with respect to the Lebesgue measure,\begin{displaymath} 	d\sigma_{Q^S}=\pa \bar{\pa} Q dA, \quad z\in \mathrm{int} S. \end{displaymath}

We call the random matrix model with the localized potential $Q^S$ the \textit{hard edge ensemble} and the one without localization the \textit{free boundary ensemble}. In contrast to the free boundary case, eigenvalues of the hard edge ensemble tend to distribute completely inside the droplet. 

\begin{rmk*}
In the theory of random Hermitian matrices, the terminology ``hard edge'' is usually used for the case associated with the Bessel kernel; for example, in the Laguerre ensemble the hard edge appears at the origin. However, the ``hard edge'' in this note represents a slightly different situation, and a one-dimensional analogue can be found in \cite{MR2411911}, which is called ``soft/hard edge" explaining the situation that a soft edge meets a hard edge.
\end{rmk*}

\begin{figure} 
\includegraphics[width=.45\textwidth]{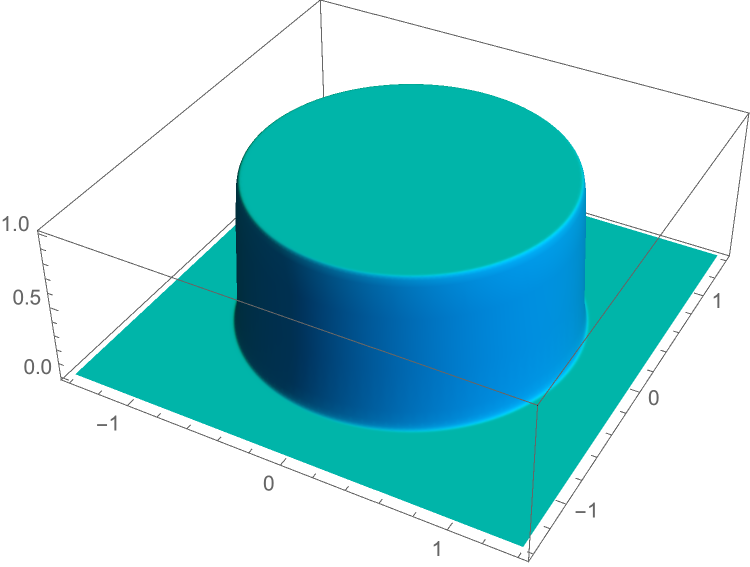}
\hspace{.05\textwidth}
\includegraphics[width=.45\textwidth]{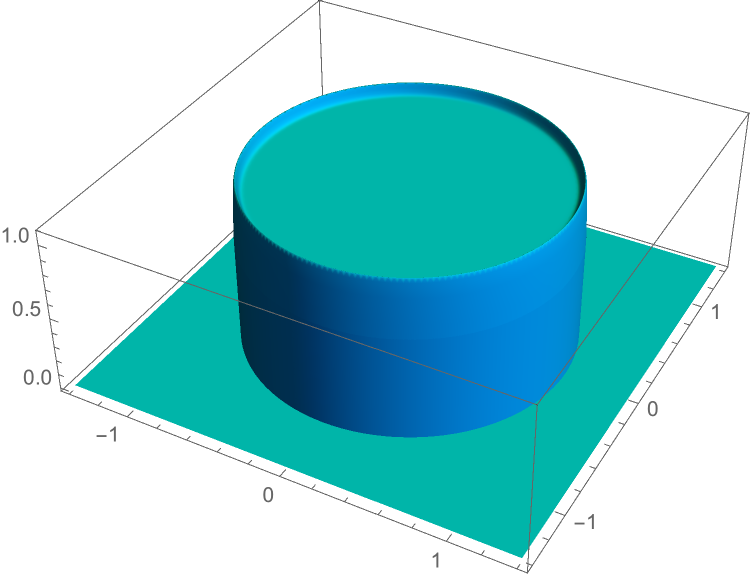}
\caption{Density profiles for the free boundary Ginibre ensemble (left) and the hard edge Ginibre ensemble (right) when $n=1000$.} \label{fig:R}
\end{figure}

For the special case when $Q(z)=|z|^2$, 
the equilibrium measure is the uniform measure on the unit disk. Figure \ref{fig:R} shows the graphs of the $1$-point density $\frac{1}{n}\fR_{n,1}$ for potentials $Q$ and $Q^S$, respectively. In both cases, the density $\frac{1}{n}\fR_{n,1}$ eventually converges to the equilibrium density $\1_{S}$ as $n$ goes to infinity.
However, on the local scale, the eigenvalues of the hard edge model are more densely distributed near the boundary of the droplet than those of the free boundary case.

We rescale the spectral radius at the boundary of the droplet in the inward direction. Let $\{ z_j \}_{1}^{n}$ be the eigenvalues for the hard edge ensemble associated with $Q^S$ and 
$|z|_n$ be the maximal modulus of the eigenvalues, i.e.,
$$|z|_n = \max_{1\leq j \leq n}|z_j|.$$ We first consider the case of the power potential $Q(z)=|z|^{2d} (d > 0)$. In this case, the droplet is the disk
$$S=\{ z\in \C\ : \  |z|\leq d^{-\frac{1}{2d}} \}$$
and the equilibrium measure is 
$$d\sigma(z) = d^2 |z|^{2d-2} \1_S(z)\,dA(z).$$
 
%Then, we obtain a limit law of the rescaled spectral radius in Section 2.
\begin{thm}\label{thm:power}
Let $Q(z) = |z|^{2d}$ with $d>0$. Let $\omega_n $ be the rescaled spectral radius of the hard edge ensemble associated with $Q^S$, which is defined by
\begin{displaymath}
	\omega_n = nd(d^{\frac{1}{2d}}|z|_n-1) \log 4.
\end{displaymath}
Then $\omega_n$ converges in distribution to the exponential distribution: 
\begin{displaymath}
	\lim_{n\to\infty}\P[\omega_n \leq \xi] = e^{\xi},\quad \xi\leq 0,
\end{displaymath}
uniformly for $\xi$ in every compact subset of $\R^{-}$.  
\end{thm}

We extend this result to general radially symmetric potentials. From now on, we assume that the potential is radially symmetric:
$Q(z)=q(|z|)$ 
where $q$ is a smooth function on $\R^+$. Also assume that $Q$ is subharmonic in $\C$ and strictly subharmonic in a neighborhood of the outer boundary of $S$. 
Due to the rotational symmetry, the boundary of the droplet $S$ consists of circles centered at the origin. We rescale the spectral radius $|z|_n$ about the radius of the outer boundary of $S$.

\begin{thm}\label{thm:radial} Let $R_0$ be the radius of the outer boundary of $S$ and $\delta = \pa \bar{\pa} Q(R_0)$.
Let $\omega_n $ be the rescaled spectral radius defined by 
\begin{displaymath}
	\omega_n = R_0 n \delta (|z|_n-R_0) \log 4.
\end{displaymath}
Then, the following convergence holds:
\begin{displaymath}
	\lim_{n\to\infty}\P[\omega_n \leq \xi] = e^{\xi},\quad \xi\leq 0,
\end{displaymath}
uniformly for $\xi$ in every compact subset of $\R^{-}$.  
\end{thm}

For finite $l$, we consider the distribution of the $l$-th largest modulus of eigenvalues. 
%Let $\{z_k \}_1^{n}$ be the eigenvalue system associated with the radially symmetric potential $Q^S$. we denote by $|z|_n^{(l)}$ the $l$-th largest $|z_k|$. 
We write the limit law of the rescaled spectral radius in Theorem \ref{thm:radial} as follows:
\begin{align*}
	F_{\mathrm{hard}}(\xi):= e^{\xi} \quad \mbox{for} \quad \xi \leq 0.
\end{align*}
For the limit law $F_{\mathrm{hard}}$, define 
\begin{align*}
	& F^{(l)}_{\mathrm{hard}} {(\xi)}:= F_{\mathrm{hard}}(\xi) \sum_{k=0}^{l-1}\frac{1}{k!} 
	\left[ - \log {F_{\mathrm{hard}} (\xi)}\right]^{k}
	= e^{\xi} \sum_{k=0}^{l-1} \frac{(-\xi)^k}{k!}
	\quad \mbox{for} \quad \xi \leq 0.
\end{align*}

\begin{thm} \label{order:hard}
Let $\{z_k \}_1^{n}$ be the eigenvalues of the hard edge ensemble associated with $Q^S$ and $|z|_n^{(l)}$ be the $l$-th largest $|z_k|$. Let $R_0$ be the radius of the outer boundary of the droplet and $\delta=\pa\bar{\pa}Q(R_0)$.  Then,
for $\xi\leq 0$,
	\[ \lim_{n\to \infty}
	\P \left[ R_0 n \delta \Big(|z|^{(l)}_n -R_0\Big) \log 4 \leq \xi \right] 
	= F_{\mathrm{hard}}^{(l)}(\xi).
	\]
\end{thm}

\begin{rmk*}
For the hard edge Ginibre ensemble, it has been shown in \cite{MR3975882} that the system of rescaled eigenvalues $\tilde{z}_j = \sqrt{n}(z_j-1)$
converges to the determinantal point field on the left half plane $\L = \{z\in\C : \Re z \leq 0\}$ with the correlation kernel 
\begin{equation*}
H(z+\bar{w})\,\1_{\L}(z) \1_{\L}(w) \,e^{z\bar{w}-|z|^2/2-|w|^2/2}, 
\end{equation*}
where $H$ is the hard edge plasma function (see Figure \ref{fig:plasma})
$$H(z) := \frac{1}{\sqrt{2\pi}} \int_{-\infty}^{0} \frac{e^{-(z-t)^2/2}}{F(t)} dt \quad ; \quad F(t) = \frac{1}{\sqrt{2\pi}}\int_{t}^{\infty}e^{-{\xi}^2/2}d\xi=\frac{1}{2}\erfc\left(\frac{t}{\sqrt{2}}\right).$$
We also refer to \cite[Section 15]{MR2641363} and \cite{MR655364} for the calculation of the hard edge kernel.
 This result generalizes to the case of radially symmetric potentials in \cite{2018arXiv180806959A}. The normalizing constant $\log 2$ in Theorems 1.1 - 1.3 has a relation with the hard edge plasma function $H$. By a simple calculation, one finds that
$$H(0) = - \int_{-\infty}^{0} \frac{d}{dt} \log  \Big( \int_{t}^{\infty} e^{-\xi^2/2} d\xi \Big) dt = \log 2.$$
\end{rmk*}

\begin{figure}
\includegraphics[width=.45\textwidth]{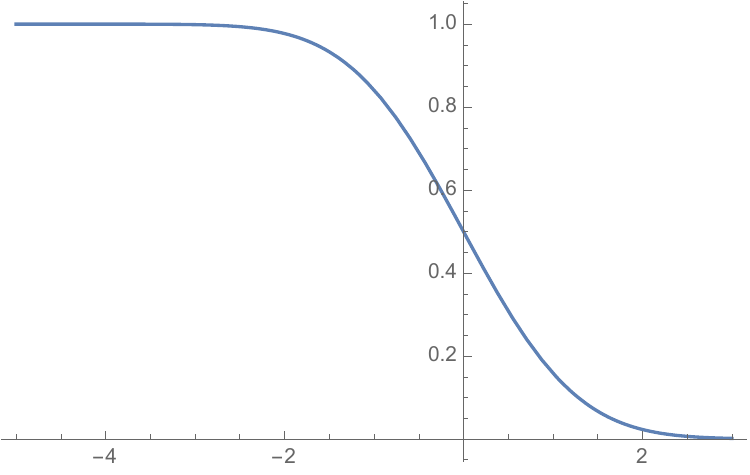}
\hspace{.05\textwidth}
\includegraphics[width=.45\textwidth]{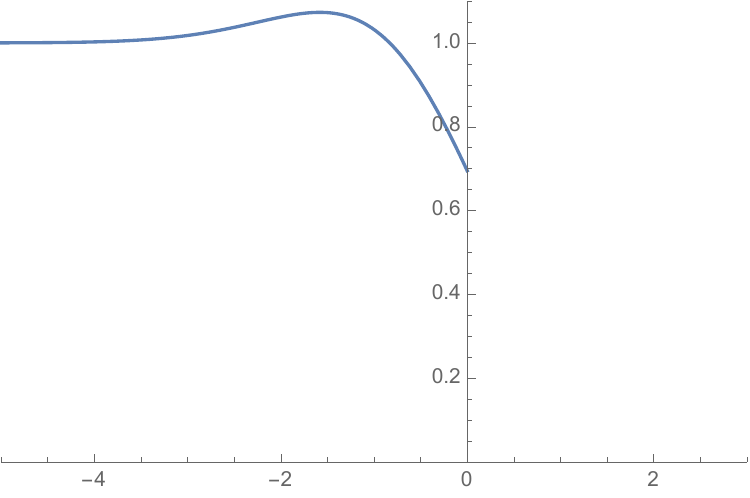}
\caption{The graphs of the free boundary plasma function $F$ (left) and the hard edge plasma function $H$ (right), restricted to $\R$.}  \label{fig:plasma}
\end{figure}

\subsection[]{Gap probabilities} \label{sec:gp}
In this subsection, we compute gap probabilities to obtain the distribution function of the spectral radius. The following computation is based on \cite{MR2514781, MR2129906}.

Let $\{z_j\}_1^n$ be the random normal matrix ensemble associated with the potential $Q$.
For $x\geq 0$, write $\Omega=\Omega_x =\{ z\in \C \ ; \ |z|>x \}$. Then the gap probability that none of the eigenvalues are contained in $\Omega$ is computed as 
\begin{align*}
	\P[|z|_n\leq x] &= \P[\mbox{no eigenvalues in }\Omega]\\
	&=\frac{1}{Z_n}\int \Big( \prod_{j=1}^{n}(1-\1_{\Omega}(z_j)) \Big) 
	e^{-n\sum Q(z_j)}\prod_{j\neq k}|z_j-z_k| \prod_{j=1}^{n} dA(z_j)\\
	&=\det \Big( \int_{\C} p_j(z) \overline{p_k(z)} (1-\1_{\Omega}(z)) 
	e^{-nQ(z)}dA(z)\Big)_{j,k=0}^{n-1},
\end{align*}
where $p_j$ is an orthonormal polynomial of degree $j$ with respect to $e^{-nQ} dA$. Each $p_j$ can be chosen to be a monomial since $Q$ is radially symmetric.  
Thus, the gap probability has a simplified form 
\begin{align}\label{gappr}
	\P[|z|_n\leq x]
	&=\det \Big( \delta_{j,k}-\int_{|z|>x } p_j(z)\overline{p_k(z)}e^{-nQ} dA \Big)_{j,k=0}^{n-1}\\
	&=\prod_{j=0}^{n-1}\Big(1-\int_{x}^{\infty} |p_j(r)|^2 e^{-nQ(r)} 2r dr \Big).\nonumber
\end{align}
In the hard edge case, we take $Q^S$ instead of $Q$. 

To find a limit of $\log \P_n[|z|_n\leq x]$ as $n\to\infty$,
we shall use the following lemma.

\begin{lem}\label{lem:parts}
Let $X_{n}$ be a subset of $\C$ such that $X_n = \{x_{n,1},\cdots, x_{n,n}\}$. If $ X_{n}$ satisfies the conditions
\begin{itemize}
\item[(a)] $\sum_{j=1}^{n} x_{n,j}=O(1)$
\item[(b)] $x_{n,j}=o(1)$  uniformly for all $1\leq j \leq n$
\end{itemize}
as $n\to\infty$, then we have
\begin{align*}
	\sum_{j=1}^{n} \log \left( 1-x_{n,j} \right)= -\sum_{j=1}^{n} x_{n,j}+o(1)
	\quad \mbox{as}\quad n \to \infty.
\end{align*}
\end{lem}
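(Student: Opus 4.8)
The plan is to reduce the statement to an elementary estimate on the tail of a sum of independent Bernoulli-type contributions, exactly as Lemma~\ref{lem:parts} is designed to handle. We are trying to prove a limit of the form $\log(1-x_{n,j}) = -x_{n,j}+o(1)$ in aggregate. Writing $S_n=\sum_{j=1}^n x_{n,j}$, the idea is: by hypothesis $(a)$, $S_n$ is bounded, and by hypothesis $(b)$ every summand is small, so in particular for $n$ large we have $|x_{n,j}|\le\tfrac12$ for all $j$, putting us safely inside the radius of convergence of $\log(1-t)$. First I would invoke the Taylor expansion with remainder: for $|t|\le\tfrac12$ there is an absolute constant $C$ with $|\log(1-t)+t|\le C t^2$. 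Applying this termwise gives
\begin{align*}
	\left| \sum_{j=1}^{n}\log(1-x_{n,j}) + \sum_{j=1}^{n} x_{n,j}\right|
	\le C\sum_{j=1}^{n} x_{n,j}^2.
\end{align*}

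So the whole lemma comes down to showing $\sum_{j=1}^n x_{n,j}^2 = o(1)$. Here is where I would combine the two hypotheses: since $0<|x_{n,j}|<1$, we have $x_{n,j}^2 \le |x_{n,j}|\cdot\sup_{1\le k\le n}|x_{n,k}|$, hence
\begin{align*}
	\sum_{j=1}^{n} x_{n,j}^2 \le \left(\sup_{1\le k\le n} |x_{n,k}|\right) \sum_{j=1}^{n} |x_{n,j}|.
\end{align*}
The first factor is $o(1)$ by $(b)$. The second factor, $\sum_j |x_{n,j}|$, is $O(1)$ — but I should be slightly careful, because hypothesis $(a)$ only controls $\sum_j x_{n,j}$, not $\sum_j |x_{n,j}|$, and these could differ if the $x_{n,j}$ are genuinely complex. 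In the application the $x_{n,j}$ are nonnegative reals (they are tail integrals $\int_x^\infty |p_j|^2 e^{-nQ}2r\,dr$), so $|x_{n,j}| = x_{n,j}$ and $\sum|x_{n,j}| = S_n = O(1)$; I would either state the lemma under that implicit positivity, or note that in the intended use the absolute values cause no trouble. Granting that, $\sum_j x_{n,j}^2 = o(1)\cdot O(1) = o(1)$, and combining with the Taylor bound finishes the proof.

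The main obstacle is really just bookkeeping rather than a genuine mathematical difficulty: one must make sure the remainder constant $C$ in $|\log(1-t)+t|\le Ct^2$ is uniform over the relevant range of $t$, which it is once $(b)$ guarantees all $x_{n,j}$ lie in a fixed compact subinterval of $(-1,1)$ for $n$ large, and one must be honest about the gap between $\sum x_{n,j}$ and $\sum|x_{n,j}|$ noted above. Aside from that, the argument is a two-line sandwich: control the quadratic error by a product of a uniformly-small factor and a bounded factor. I would present it in essentially that order — first fix $n$ large enough that all summands are in $[-\tfrac12,\tfrac12]$, then write the termwise Taylor estimate, then bound the sum of squares by the displayed product, then conclude.
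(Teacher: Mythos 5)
Your argument is correct and is essentially the paper's own proof: both expand $\log(1-t)=-t+O(t^2)$ termwise and kill the quadratic remainder by bounding it as (uniformly small factor from (b)) $\times$ (bounded sum from (a)). Your extra remark about $\sum_j x_{n,j}$ versus $\sum_j |x_{n,j}|$ is a fair point of care (the paper's proof tacitly makes the same identification, and in the application the $x_{n,j}$ are indeed nonnegative), but it does not change the route.
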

\begin{proof}
For sufficiently large $n$, there exists a function $f$ such that 
\begin{align*}
	\sum_{j=1}^{n} \log \left( 1-x_{n,j} \right)
	=-\sum_{j=1}^{n} \left(  x_{n,j} + x_{n,j}f(x_{n,j}) \right)
\end{align*}
and $f(x)=o(|x|)$ as $|x| \to 0$ by the Taylor series expansion. It is immediately obtained from the assumptions (a) and (b) that
\begin{align*}
	\sum_{j=1}^{n}  x_{n,j}f(x_{n,j}) = o(1)\quad \mbox{as}\quad n\to\infty,
\end{align*}
which proves the lemma.
\end{proof}

\subsection{Comments and related works}
\subsubsection{}
 In the free boundary case, the eigenvalues are admitted to be outside the droplet in contrast to the hard edge case. It is known that the spectral radius fluctuates slightly away from the droplet \cite{MR1986426, MR3215627}. For example, in the free boundary Ginibre case, the spectral radius $|z|_{n}$ is approximated by
$$|z|_{n} \simeq 1 + \sqrt{ \frac{\gamma_n}{4n}} + \frac{1}{\sqrt{4n\gamma_n}}\,G$$
where $\gamma_n = \log(n/2\pi) -2\log\log n$ and $G$ has a standard Gumbel distribution with the cumulative distribution function given by $\P[G\leq x]=e^{-{e^{-x}}}$ for $x\in \R$. On the other hand, the spectral radius of the hard edge ensemble fluctuates in a narrow range inside the droplet within a distance of order $n^{-1}$ from the boundary. 
 \subsubsection{}
When the external potential does not satisfy the growth condition \eqref{Q:gr}, the eigenvalues may not accumulate on a compact set. For example, the random normal matrix ensemble associated with the potential $Q(z)=\frac{n+1}{n}\log(1+|z|^2)$, also known as the spherical ensemble \cite{MR2552864}, the support of the equilibrium is the whole complex plane $\C$ and its density has a heavy tail. A class of potentials for which 
fluctuations of the spectral radius (modulus of the extremal particle) are heavy-tailed has been studied in \cite{2018arXiv181112225B, MR4079754}. We also mention that three universality classes, Gumbel, Fr\'{e}chet, and Weibull, for the statistics of the spectral radius have been investigated in \cite{MR3761607}.

\subsubsection{}
Random normal matrix ensembles are represented by two-dimensional Coulomb gases at a specific temperature where the system has a determinantal structure. 
Global properties of two-dimensional Coulomb gas at any temperature have been focused in several works, e.g., the global fluctuation of particles about the equilibrium 
has been studied in \cite{2016arXiv160908582B} and \cite{MR3788208}. The separation of Coulomb particles has been analyzed in \cite{MR3784540}, and a crossover behavior of them in a high temperature regime has been studied in \cite{MR3962973}. The distance between the extremal particle and the droplet has been investigated for general potentials in \cite{2019arXiv190700923A}. 

The hard edge setting described in present paper can be applied to a Coulomb gas in any dimension. In \cite{MR3664397}, the transition between ``pushed" and ``pulled" phases for Coulomb gases in any dimension has been studied, and the ''hard edge" in this paper is exactly the critical situation where the transition occurs. 
A natural question is to ask about the fluctuation of the maximal modulus of particles at the hard edge in any dimension.

%section2%%%%%%%%%%%%%%%%%%%%%%%%%%%%%%%%%%%%%%%%%%
\section[]{Proof of Theorem \ref{thm:power}}

In this section, we prove Theorem \ref{thm:power}. The proof is mainly based on the normal approximation to the gamma distribution. This approach is also found in the study of the free boundary ensembles \cite{MR3215627}. 

Consider the case when $Q(z)=|z|^{2d}$. The corresponding droplet is the disk
$$S=\{z\in\C : |z| \leq d^{-1/2d}\},$$
and the localized potential $Q^S$ is defined by 
\begin{align*}
	Q^S (z)= 
	\begin{cases}
		|z|^{2d},& \quad |z|\leq d^{-\frac{1}{2d}} \\
		+\infty,& \quad |z |> d^{-\frac{1}{2d}}.
	\end{cases}
\end{align*}
The orthonormal polynomial of degree $j$ with respect to the measure $e^{-nQ^S}dA$ is 
\begin{displaymath}
	p_j(z)= \left(\frac{d\,n^{\frac{j+1}{d}}}{\gamma(\frac{j+1}{d},\frac{n}{d})}\right)^{\frac{1}{2}} z^j,
%	\frac{d^{\frac{1}{2}} n^{\frac{j+1}{2d}} z^j}{  \sqrt{\gamma(\frac{j+1}{d},\frac{n}{d})} },
\end{displaymath}
where $\gamma(s,\alpha)= \int_{0}^{\alpha} t^{s-1}e^{-t}dt $ is the lower incomplete gamma function. Now we define the rescaled spectral radius by 
\[
	\omega_n = dn\left(d^{\frac{1}{2d}}|z|_n -1\right)\log4
\]
and the cumulative distribution function by
$F_n (\xi)= \P[\omega_n \leq \xi]$ for $\xi \leq 0$.
Write $x=d^{-\frac{1}{2d}}\left(1+\frac{\xi}{dn\log 4}\right)$. By \eqref{gappr},
%Using the following approximation: 
%\[	nx^{2d}=\frac{n}{d}\left(1+\frac{\xi}{dn\log4} \right)^{2d} 	=\frac{n}{d} + \frac{\xi}{d\log 2}+o(1) \quad 	 \mbox{as}\quad n \to \infty,\]
we calculate $\log F_n(\xi)$ as follows:
\begin{align}\label{logsum}
	\log F_n (\xi)
	&=\log \P[|z|_n \leq x]\\ \nonumber
	&=\log \prod_{j=0}^{n-1} \Big( 1 -
	\int_{x<|z|\leq  d^{-\frac{1}{2d}}} 
	|p_j(z)|^2 e^{-n|z|^{2d}} dA(z)\Big)\\ \nonumber
	&=\log \prod_{j=0}^{n-1} \Big(1- \int_{nx^{2d}}^{\frac{n}{d}} 
	\frac{t^{\frac{j+1}{d}-1}}{\gamma \big(\frac{j+1}{d},\frac{n}{d}\big)} e^{-t} dt \Big)\\ \nonumber
	&= \sum_{j=0}^{n-1}\log\Big(1- \int_{\frac{\xi}{d\log2}+o(1)}^{0} 
	\Big( t+ \frac{n}{d} \Big) ^{\frac{j+1}{d}-1} 
	\frac{e^{-\left(t+\frac{n}{d}\right)}}{\gamma\big(\frac{j+1}{d},\frac{n}{d}\big)} dt \Big),\nonumber
\end{align}
where $o(1)\to 0$ uniformly for $\xi$ in every compact subset of $\R^{-}$ as $n\to \infty$. 

Let $\Phi$ and $\phi$ denote the cumulative distribution function and the probability density function of the standard gaussian distribution respectively, i.e.,
\begin{equation}\label{cdfpdf}
\Phi(x) = \frac{1}{\sqrt{2\pi}}\int_{-\infty}^{x} e^{-t^2/2} dt\quad;\quad \phi(x)=\frac{1}{\sqrt{2\pi}}e^{-x^2/2}.
\end{equation}

Throughout the section, let $n_0$ denote $\sqrt{n}\log n$.
\begin{lem}\label{lem:app}
For $0\leq k \leq n_0$, we have the following approximations:
\begin{align*}
	\frac{\gamma\left(\frac{n-k}{d},\frac{n}{d}\right)}{\Gamma\left(\frac{n-k}{d} \right)}
	&= \Phi\left(\frac{k}{\sqrt{(n-k)d}}\right)+O\left(n^{-1/2}\right)
	, \\
	\left(t+\frac{n}{d}\right)^{\frac{n-k}{d}-1} \frac{e^{-\left(t+\frac{n}{d}\right)}}
	{\Gamma\left(\frac{n-k}{d}\right)}
	&= \sqrt{\frac{d}{n-k}}\phi\left(\frac{k}{\sqrt{(n-k)d}}\right) \left(1+O\left(n^{-1/2}\log^3 n\right)\right) 
\end{align*}
as $n\to \infty$, where the error terms are uniform in $k$ with $0\leq k \leq n_0$ and $t$ on every compact subset of $\R^{-}$. Furthermore, $\Gamma\left(\frac{n-k}{d} \right) /
\gamma\left(\frac{n-k}{d},\frac{n}{d}\right) $ is uniformly bounded for all $k$ with $0\leq k \leq n-1$ and sufficiently large $n$.
\end{lem}

\begin{proof}

Let $U_{m}$ be a random variable which follows a Gamma distribution 
$\Gamma\left(\frac{m}{d},1 \right)$. If we write $G_{m}$ and $g_{m}$ for the cumulative distribution function and the probability density function of $U_{m}$ respectively, then we have
\begin{align*}
G_{n-k} \left(\frac{n}{d}\right)=\frac{\gamma\left(\frac{n-k}{d},\frac{n}{d}\right)}{\Gamma\left(\frac{n-k}{d} \right)}\quad ; \quad
g_{n-k}\left(t+\frac{n}{d} \right)=\left(t+\frac{n}{d}\right)^{\frac{n-k}{d}-1} \frac{e^{-\left(t+\frac{n}{d}\right)}}{\Gamma\left(\frac{n-k}{d}\right)}.
\end{align*}
Fix an integer $k$ with $0\leq k \leq \sqrt{n}\log n$. Since $Y= \sqrt{\frac{d}{n-k}}\left( U_{n-k}-\frac{n-k}{d} \right)$ is asymptotically normal, we have the following approximation by the Berry-Esseen theorem:
\begin{align}\label{eqn:cdf}
	G_{n-k}\left( \frac{n}{d} \right)
	=\P\left[{\sqrt{\frac{n-k}{d}}}\ Y < \frac{k}{d} \right]
	= \Phi\left(\frac{k}{\sqrt{(n-k)d}}\right)
	+O \left( n^{-1/2} \right)
\end{align}
as $n\to \infty$. Here, note that the $O$-constant can be taken to be uniform in $k$ with $0\leq k \leq \sqrt{n}\log n$. Now write $f_{Y}$ for the probability density function of the random variable $Y$. By the Edgeworth expansion for $f_{Y}$, we obtain 
\begin{align*}
f_{Y}(x) = \phi(x)\left(1+ \frac{c}{\sqrt{n}}H_3(x)+O\left(n^{-1}\right)\right),
\end{align*}
where $H_3$ is the Hermite polynomial of order $3$ and $c$ is a constant. 
 This gives 
\begin{align}\label{eqn:pdf}
	&g_{n-k} \left( t+ \frac{n}{d} \right)\\
	&=\sqrt{\frac{d}{n-k}} \,\phi\left(\frac{k+td}{\sqrt{(n-k)d}}\right)
	\left( 1+ \frac{c}{\sqrt{n}}H_3\left( \frac{k+td}{\sqrt{(n-k)d}} \right)  + O\left(n^{-1}\right) \right)
 \nonumber,
\end{align}
which proves the first statement of the lemma. 

To show the uniform boundedness of $\Gamma\left(\frac{n-k}{d} \right) /\gamma\left(\frac{n-k}{d},\frac{n}{d}\right)$, we observe that
for $1\leq j\leq n-1$
\[
	\P\left[U_j <\frac{n}{d}\right]\geq \P\left[U_{j+1}<\frac{n}{d}\right].
\] 
By (\ref{eqn:cdf}), we have
\[
	\lim_{n\to\infty}\P\left[U_{n}<\frac{n}{d}\right] = \P[Z<0]=\frac{1}{2},
\] 
where $Z$ is the standard normal distribution. This gives the following inequality: for all $k$ with $0\leq k\leq n-1$,
\[
\frac{\Gamma\left(\frac{n-k}{d}\right)}{\gamma\left(\frac{n-k}{d},\frac{n}{d}\right)}
\leq 2+o(1),
\]
where $o(1)\to 0$ uniformly in $k$ as $n\to \infty$ .
\end{proof}

Returning to \eqref{logsum}, we consider the following sums:
\begin{align*}
S_n :=  \sum_{0 \leq k \leq n_0} \! \left(t+\frac{n}
	{d}\right)^{\frac{n-k}{d}-1} \!
	\frac{e^{-\left(t+\frac{n}{d}\right)}}{\gamma\left(\frac{n-k}{d},\frac{n}{d}\right)},\quad
\epsilon_n:= \! \sum_{n_0< k < n} \!\left(t+\frac{n}
	{d}\right)^{\frac{n-k}{d}-1} \!
	\frac{e^{-\left(t+\frac{n}{d}\right)}}{\gamma\left(\frac{n-k}{d},\frac{n}{d}\right)}.
\end{align*}

\begin{lem}\label{lem:sum}
As $n\to \infty$, we have
\begin{align*}
	S_n \to d\log 2 \quad \mbox{and} \quad
	\epsilon_{n} \to 0
\end{align*}
locally uniformly for $t\in \R^{-}$.
\end{lem}

\begin{proof}
By Lemma \ref{lem:app}, we have
\begin{align*}
	S_n &= \sum_{0\leq k \leq n_0} \left(\frac{\Gamma\left(\frac{n-k}{d} \right)}
	{\gamma\left(\frac{n-k}{d},\frac{n}{d}\right)} \left(t+\frac{n}{d}\right)^{\frac{n-k}{d}-1}
	 \frac{e^{-\left(t+\frac{n}{d}\right)}}{\Gamma\left(\frac{n-k}{d}\right)}\right)\\
	 &= \sum_{0\leq k\leq n_0} \sqrt{\frac{d}{n}}\,\left(\Phi\left(\frac{k}{\sqrt{nd}}\right)\right)^{-1}\phi\left(\frac{k}{\sqrt{nd}}\right)\left(1+o(1)\right)
\end{align*}
as $n\to \infty$. By the Riemann sum approximation, we obtain
\begin{align*}
	\lim_{n\to\infty}S_n = \frac{d}{\sqrt{2\pi}} \int_0^{\infty} \frac{e^{-\frac{x^2}{2}}}{\Phi(x)} dx.
\end{align*}
Integration by parts gives  
\begin{align*}
	\frac{d}{\sqrt{2\pi}} \int_0^{\infty} \frac{e^{-\frac{x^2}{2}}}{\Phi(x)} dx 
	= d \int_{0}^{\infty} \frac{d}{ds} \log \Phi(s) ds 
	= d \log 2,
\end{align*}
which proves the convergence of $S_n$ to $d\log 2$.

Now fix an integer $k$ with $n_0 < k \leq n-1$. We observe that there exists a positive constant $C$ such that
\begin{align}\label{ginq}
g_{n-k}\left(t+\frac{n}{d}\right) \leq C \, g_{n-n_0}\left(t+\frac{n}{d}\right).
\end{align}
Indeed, the inequality
\begin{align*}
	\frac {g_{n-n_0}\left(t+\frac{n}{d}\right)}{g_{n-k}\left(t+\frac{n}{d}\right) }& 
	= \left(t+\frac{n}{d} \right)^{\frac{k}{d}-\frac{n_0}{d}} \frac{\Gamma\left(\frac{n-k}{d}\right) }{\Gamma\left(\frac{n-n_0}{d}\right)}\\
	&\geq \int_{0}^{t+\frac{n}{d}} 
	\frac{x^{\frac{n-n_0}{d}-1}e^{-x}}{\Gamma\left( \frac{n-n_0}{d} \right)}dx
	=\P \left[ U_{{n-n_0}}<t+\frac{n}{d}\right]
\end{align*} 
implies that for sufficiently large $n$,
\begin{align*}
	g_{n-n_0}\left( t+\frac{n}{d} \right) 
	\geq \left(\frac{1}{2}+o(1) \right) g_{n-k} \left( t+\frac{n}{d} \right)
\end{align*} 
where $o(1)$ is uniform for $k$ with $n_0 < k \leq n-1$ and for $t$ in every compact subset of $\R^{-}$.
By \eqref{eqn:pdf}, we have
\begin{align*}
	g_{{n-n_0}}\left( t+\frac{n}{d} \right) \leq 
	c_1 n^{-1/2} e^{-c_2(\log n)^2} 
\end{align*}
for some positive constants $c_1$ and $c_2$. By \eqref{ginq} and the uniform boundedness of $(G_{n-k}(n/d))^{-1}$ shown in Lemma \ref{lem:app}, there exists a constant $C'$ such that
$$\epsilon_n = \sum_{n_0<k<n}\frac{g_{n-k}\left(t+\frac{n}{d}\right)}{G_{n-k}\left(\frac{n}{d}\right)} \leq C' n^{1/2} e^{-c_2(\log n)^2},$$
which proves the convergence of $\epsilon_n$ to $0$.
\end{proof}

\begin{proof}[Proof of Theorem \ref{thm:power}]
Returning to the sum in \eqref{logsum}, we observe from Lemma \ref{lem:sum} that 
\begin{align}\label{conA}
	\sum_{j=0}^{n-1} 
	\int_{\frac{\xi}{d\log2}}^{0} \!\! \left( t + \frac{n}{d}\right)^{\frac{j+1}{d}-1}
	\!\! \frac{e^{-t-\frac{n}{d}}}{\gamma\Big(\frac{j+1}{d},\frac{n}{d}\Big)}dt 
	= \int_{\frac{\xi}{d\log2}}^{0} S_n + \epsilon_{n}\, dt \to -\xi
\end{align}
as $n\to \infty$. We also obtain that 
\begin{align}\label{conB}
	\lim_{n\to\infty}\int_{\frac{\xi}{d\log2}}^{0} \left( t+ \frac{n}{d} \right)^{\frac{j+1}{d}-1} 
	\frac{e^{-t-\frac{n}{d}}}{\gamma\Big(\frac{j+1}{d},\frac{n}{d}\Big)} dt = 0
\end{align} 
uniformly in $j$ with $0\leq j\leq n-1$ 
by Lemmas \ref{lem:app} and \ref{lem:sum}. By Lemma \ref{lem:parts}, it follows from \eqref{conA} and \eqref{conB} that 
\begin{align*}
	\lim_{n\to\infty}\log F_n(\xi)= \xi .
\end{align*}
uniformly on compact subsets of $\R^{-}$.
\end{proof}

%Section3%%%%%%%%%%%%%%%%%%%%%%%%%%%%%%%%%%%%%%%%%%%%%%%%%%%

\section[]{Universality for the hard edge ensembles}

In this section, we prove Theorem \ref{thm:radial}.
We consider a radially symmetric potential $Q(z)= q(|z|)$ with the following assumptions:
\begin{enumerate}
\item[(i)] $q : \R^{+}\to \R$ is smooth;
\item[(ii)] $Q$ is subharmonic on $\C$;
\item[(iii)] $Q$ is strictly subharmonic on a neighborhood of the outer boundary of $S$.
\end{enumerate}

Since $\Delta Q(r) \geq 0$ on $(0,\infty)$, $rq'(r)$ is increasing on $(0,\infty)$.
Following Saff and Totik \cite[Section IV.6]{MR1485778}, 
let $r_0$ be the smallest number that $q'(r)>0$ for all $r>r_0$ and $R_0$ be the smallest solution to $R_0 \,q'(R_0)=2$. Then the droplet $S$ is the ring
\begin{align*}	
	S= \{z\in\C\ ;\  r_0 \leq |z| \leq R_0 \}.
\end{align*}
Also note that $\{ rq'(r) \; | \; r_0 \leq r \leq R_0 \}=[0,2]$, which implies that for each $\alpha \in [0,2]$, there exists $r_{\alpha} \in [r_0, R_0]$ such that $r_\alpha q'(r_\alpha)=\alpha$. 

Let $\omega_n$ be the rescaled spectral radius defined by
$$ \omega_n = n C_0 (|z|_n - R_0)$$ 
where $C_0 = R_0 \delta \log 4$ and $\delta=\pa\bar{\pa}Q(R_0)>0$. Write $F_n$ for the cumulative distribution function of $\omega_n$ i.e., $F_n(\xi)=\P[\omega_n\leq \xi]$ for $\xi \leq 0$. 

Now we consider the functions $V$ and $V_k$ where 
\begin{align*}
	V(r)&=q(r)- 2\log r\\
	V_k(r)&=q(r) - \left(2- \frac{2k+1}{n} \right)\log r.
\end{align*}
Setting $k=(n-1)-j$, we have
\begin{align*}
	F_n(\xi)&=\prod_{j=0}^{n-1} 
	\left(1- \int_{x}^{R_0} |p_j (r)|^2 e^{-nq(r)} 2r dr \right)
	=\prod_{k=0}^{n-1} 
	\left( 1-\frac{\int_{x}^{R_0} e^{-nV_k (r)} dr }{\int_{r_0}^{R_0} e^{-nV_k (r)} dr} \right),
\end{align*}
where $x=R_0+ (nC_0)^{-1}\xi$. We write 
\begin{align}\label{x}
	x_{n,k}:=\frac{\int_{x}^{R_0} e^{-nV_k (r)} dr }{\int_{r_0}^{R_0} e^{-nV_k (r)} dr}
\end{align}
and show that $\{ x_{n,k} \}$ satisfies the conditions in Lemma \ref{lem:parts}. As in the previous section, we use the notation $n_0=\sqrt{n}\log n$ throughout this section.

\begin{lem} \label{lem:ms}
We have
\begin{equation}\label{mainsum}
	\sum_{0\leq k \leq n_0} x_{n,k} = - \xi + o(1),
\end{equation}
where $o(1) \to 0$ uniformly for $\xi$ in every compact subset of $\R^{-}$ as $n\to \infty$.
\end{lem}

\begin{proof}
We observe 
\begin{align*}
	V_k '(r)&= \frac{1}{r} \left( rq'(r) - \left(2-\frac{2k+1}{n} \right) \right), \\
	V_k ''(r)&= \Delta Q(r)  - \frac{1}{r^2} \left( rq'(r) - \left(2-\frac{2k+1}{n} \right) \right).
\end{align*}
We set $f(r)= rq'(r)$. Then, $f$ is increasing in $(0,\infty)$ and $f'>0$ in a neighborhood of $R_0$.
For each $k$ with $0\leq k \leq n_0$, there exists a unique $t_k \in [r_0, R_0]$ such that 
\begin{equation}\label{def:tk}
	f(t_k)=2-\frac{2k+1}{n},
\end{equation}
and $t_k$ is decreasing in $k$. Note that $f'(R_0) = R_0 \Delta Q(R_0) = 4\delta R_0$.
From the Taylor series expansion 
\begin{equation*}
	f(r) - f(R_0) = 4\delta R_0(r - R_0) + O(|r - R_0|^2),\quad r \to R_0,
\end{equation*}
we obtain 
\begin{align}\label{dist}
	R_0 - t_k= \frac{1}{4\delta R_0 }\frac{2k+1}{n}+O\left( \frac{k}{n} \right)^2.
\end{align}
Again, Taylor's theorem gives that there exists $t^*_{k,r}\in (r,t_k)$ such that 
\begin{align}\label{taylor:Vk}
	V_k(r)
	&=V_k(t_k)+\frac{1}{2}\Delta Q(t_k)(r-t_k)^2+ \frac{1}{6} V'''_k(t^*_{k,r}) |r-t_k|^3.
\end{align}
Here we note that 
\begin{align} \label{deltaQ}
	\Delta Q(t_k)&=\Delta Q(R_0)+ O\left( n^{-1/2} \log n \right),
\end{align}
where the error term is uniform for $k$ with $0\leq k\leq n_0$. Since 
\begin{align*}
	V_{k}'''(r)= V'''(r) + \left( \frac{2k+1}{n} \right) \frac{2}{r^3},
\end{align*}
we can choose sufficiently small $\epsilon>0$ such that there exists a constant $M$ satisfying
\begin{align*}
	\sup_{t\in[R_0-\epsilon, R_0]} |V_{k}'''(t)| < M.
\end{align*}
for all $k$ with $0\leq k\leq n_0$. Write $\epsilon_n = n^{-1/2}\log n$ and $$\zeta_k = \frac{2k+1}{2R_0 \sqrt{n\delta}}.$$
By \eqref{dist}, \eqref{taylor:Vk} and \eqref{deltaQ}, we obtain that 
for $k$ with $0\leq k\leq n_0$
\begin{align}\label{asym:1}
	\int_{t_k-\epsilon_n}^{R_0} e^{-n V_k(r)  }dr 
	&= e^{-nV_k(t_k)}\int_{-\infty}^{\zeta_k} e^{-s^2/2} \frac{ds}{\sqrt{4n\delta }}\, 
	(1+o(1)),\end{align}
where $o(1)\to 0$ uniformly in $k$ as $n\to \infty$. On the other hand, since $V'_k(t_k)=0$ and $V'_k(r)\leq 0$ for $r\leq t_k$,
there exist positive constants $c$ and $C$ such that 
\begin{align}\label{asym:2}
	\int_{r_0}^{t_k-\epsilon_n} e^{-n\left( V_k(r) - V_k(t_k) \right)}dr 
	&\leq C e^{-n \left( V_k\left(t_k-\epsilon_n \right) -V_k(t_k) \right)}
	\leq C e^{-c(\log n)^2}.
\end{align}
Combining \eqref{asym:1} and \eqref{asym:2}, we obtain the following asymptotics:
\begin{align}\label{asym:dm}
	\int_{r_0}^{R_0} e^{-nV_k(r)} dr = \sqrt{\frac{\pi}{2n\delta}} \, e^{-nV_k(t_k)} \,\Phi(\zeta_k)\, (1+o(1)),
\end{align}
where $\Phi$ is the cumulative distribution function of the standard gaussian distribution defined in \eqref{cdfpdf} and $o(1)\to 0$ uniformly in $k$ as $n\to \infty$.

Now we observe that by \eqref{dist}, \eqref{taylor:Vk}, and \eqref{deltaQ},
\begin{align}\label{asym:nm}
	\int_{R_0+\frac{\xi}{nC_0}}^{R_0} e^{-n V_k(r)} dr
	&= e^{-nV_k(t_k)} \int_{\zeta_k+ \frac{\sqrt{4\delta}}{\sqrt{n}C_0}\xi }^{\zeta_k} 
	e^{-s^2/2} \frac{ds}{\sqrt{4n\delta}} \,\left( 1+o(1) \right)\\ 
	&= - (nC_0)^{-1}\xi \cdot  e^{-nV_k(t_k)-\zeta_k^2/2} 
	\left( 1+o(1) \right),\nonumber
\end{align}
where $o(1)\to 0$ uniformly for $k$ with $0\leq k\leq n_0$ and for $\xi$ in every compact subset of $\R^{-}$ as $n\to \infty$. Combining the asymptotics \eqref{asym:dm} and \eqref{asym:nm} gives 
\begin{align*}
	\sum_{0\leq k \leq n_0}  x_{n,k} = - \frac{\sqrt{2\delta}}{\sqrt{n\pi}\,C_0}\xi \,(1+o(1)) \sum_{k=0}^{n_0} 
	\frac{e^{-\zeta_k^2/2}}{\Phi(\zeta_k)}.
\end{align*}
By the Riemann sum approximation with step length $(R_0\sqrt{n\delta})^{-1}$, we obtain
\begin{align*}
	\frac{1}{R_0\sqrt{n\delta}}\sum_{0\leq k \leq n_0} 
	\frac{e^{-\zeta_k^2/2}}{\Phi(\zeta_k)} = \int_{0}^{\infty} \frac{e^{-t^2/2}}{\Phi(t)} \, dt + o(1) = \sqrt{2\pi} \log 2 +o(1)
\end{align*}
as $n\to \infty$. Recall that the constant $C_0$ is defined by $$C_0 = R_0 \delta \log 4,$$
and we conclude that \eqref{mainsum} holds.
\end{proof}

The sum of $x_{n,k}$ over $k$ from $n_0$ to $n-1$ vanishes as $n\to \infty$. The following lemma gives the desired estimate.

\begin{lem}\label{lem:es}
We have
\begin{align*}
\sum_{n_0<k\leq n-1} x_{n,k} = o(1),
\end{align*}
where $o(1)\to 0$ locally uniformly for $\xi \in \R^{-}$ as $n\to\infty$.
\end{lem}

\begin{proof}
Write $n_1$ for the number $\frac{1}{2}\sqrt{n} \log n$ and $t_{n_1}$ for the unique solution to $$f(t)=2-\frac{2n_1+1}{n}$$
as defined in \eqref{def:tk}. Write $\epsilon_n$ for the number $n^{-1/2}\log n$.
It follows from \eqref{dist} that 
\begin{equation*}
	t_{n_1} - t_{n_0} \geq c_0 \,\epsilon_n
\end{equation*}
for some positive constant $c_0$. Using this, we obtain that for $n_0 < k \leq n-1$
\begin{align}\label{ine:rR}
	\int_{r_0}^{R_0} e^{-nV_k(r)} dr \geq \int_{t_{n_0}}^{t_{n_1}} e^{-nV_k(r)} dr \geq c_0 \, \epsilon_n \,e^{-nV_k(t_{n_1})},
\end{align}
where the constant $c_0$ is uniform in $k$.  

On the other hand, we observe that for $k$ with $n_0 < k \leq n-1$
\begin{align}\label{vkvn}
	V_{k}(r) - V_k(t_{n_1}) = V_{n_1}(r) - V_{n_1}(t_{n_1}) - \frac{2(k-n_1)}{n}\log \frac{t_{n_1}}{r}.
\end{align}
Since $R_0 - t_{n_1} = c_1\, \epsilon_n + O(\epsilon_n)^2$ for some positive constant $c_1$, there exists a constant $c_2$ such that
for $s$ in a compact subset in $\R^{-}$   
\begin{align*}
	V_{k}\left(R_0 + \frac{s}{n} \,\right) - V_k(t_{n_1}) \geq V_{n_1}\left(R_0 + \frac{s}{n}\,\right) - V_{n_1}(t_{n_1}) \geq c_2 \, \epsilon_n^2
\end{align*}
by \eqref{vkvn}. It follows from \eqref{ine:rR} that for all $k$ with $n_0 < k\leq n-1$ and $x = R_0 + (nC_0)^{-1}\xi$ 
\begin{align*}
	x_{n,k} \leq (c_0\,\epsilon_n)^{-1}\int_{x}^{R_0} e^{-n(V_k(r)- V_{k}(t_{n_1}))} \,dr \leq C n^{-1/2} e^{-c (\log n)^2}
\end{align*}
for some constants $c$, $C$ which is uniform for $\xi$ in every compact subset in $\R^{-}$. Thus we have
\begin{align*}
	\sum_{n_0\leq k \leq n-1} x_{n,k} =o(1),\quad n\to \infty,
\end{align*}
which completes the proof.
\end{proof}

\begin{proof}[Proof of Theorem \ref{thm:radial}]
Combining Lemma \ref{lem:ms} and Lemma \ref{lem:es}, we obtain that 
\begin{align*}
	\sum_{k = 0}^{n-1} x_{n,k} = - \xi + o(1),
\end{align*}
where $o(1)\to 0$ locally uniformly for $\xi \in \R^{-}$ as $n\to \infty$. It also follows from Lemma \ref{lem:es} that $x_{n,k}\to 0$ uniformly for all $k$ with $n_0\leq k \leq n-1$ as $n\to \infty$. For all $k$ with $0\leq k \leq n_0$, we have a uniform error bound $x_{n,k} = O(n^{-1/2})$ by \eqref{asym:dm} and \eqref{asym:nm}. Hence, by Lemma \ref{lem:parts}, we conclude that 
\begin{align*}
F_n(\xi) = -\xi + o(1),
\end{align*}
where $o(1)\to 0$ locally uniformly in $\R^{-}$. 
\end{proof}

%%%%%%%%%%%%%%%%%%%%%%%%%%%%%%%

%Section4%%%%%%%%%%%%%%%%%%%%%%%%%%%%%%%%%%%%%%%%%%%%%%%%%%%

%Section4%%%%%%%%%%%%%%%%%%%%%%%%%%%%%%%%%%%%%%%%%%%%%%%%%%%

\section[]{Order statistics}

In this section, we examine the limit law of the $l$-th modulus of eigenvalues of random normal ensembles with a hard edge. Our approach is inspired by the earlier work in \cite{MR2035641}, where the order statistics for the free boundary Ginibre ensemble was studied.

\begin{proof}[Proof of Theorem \ref{order:hard}]
Let $\{z_j\}_1^n$ be the random normal matrix ensemble associated with the potential $Q^S$. 
We consider the probability $p_{n,k}(x)$ that exactly $k$ eigenvalues are in the region $\{z\in\C : |z|>x\}$. It is computed as follows: 
\begin{align*}
	p_{n,k}(x)
	=\frac{1}{k!} \left( \frac{d}{d\lambda}\right)^{k} \bigg| _{\lambda=-1} 
	\E \prod _{j=1}^{n} \left(1+\lambda \mathbf{1}_{\{|z|>x\}}(z_j) \right).
\end{align*}
See \cite{MR2514781}. It follows from the computation in Section \ref{sec:gp} that 
\begin{align*}
	\E \prod _{j=1}^{n} \left(1+\lambda \mathbf{1} _{\{|z|>x\}}(z_j) \right) 
	&=\prod_{j=0}^{n-1} \left(1+ \lambda \int_{\{|z|>x\}} |p_j(z)|^2 
	e^{-n Q^S (z)} dA(z) \right),
\end{align*}
where $p_j$ is an orthonormal polynomial of degree $j$ with respect to $e^{-nQ^S}dA$. For $\xi \leq 0$ and $x=  R_0+ \xi/(nC_0)$ with $C_0=R_0\delta\log4$, we write
\begin{align*}
	x_{n,j}= \int_{\{|z|>x\}} |p_j(z)|^2  e^{-n Q^S (z)} dA(z)
\end{align*}
and $g_n(\lambda)=\prod_{j=0}^{n-1}\left(1+\lambda x_{n,j}\right)$ as a function of $\lambda$ in $\C$. We observe that the distribution function of the $l$-th largest modulus $|z|_{n}^{(l)}$ can be written as follows:
\begin{align*}
	\P \left[|z|_{n}^{(l)}\leq x\right]
	&= \sum_{k=0}^{l-1}\  p_{n,k}(x) = \sum_{k=0}^{l-1}\frac{1}{k!} \left( \frac{d}{d\lambda}\right)^{k} \bigg| _{\lambda=-1} g_n(\lambda).
\end{align*}

It is sufficient to show that 
	\begin{align}\label{eqn:order}
	\lim_{n\to\infty} \left( \frac{d}{d\lambda}\right)^{k}\bigg|_{\lambda=-1} g_n(\lambda) 
	= F_{\mathrm{hard}}(\xi)\left[- \log F_{\mathrm{hard}}(\xi) \right]^k,
	\end{align}
where $F_{\mathrm{hard}}(\xi) = e^{\xi}$ as defined before Lemma \ref{order:hard}. 
In Section 3, we have shown that $\{ x_{n,j} \}$ satisfies the conditions (a), (b) in Lemma \ref{lem:parts} and $\sum_{j=0}^{n-1} x_{n,j}$ converges to $-\log F_{\mathrm{hard}}(\xi)$. By applying Lemma \ref{lem:parts} to $\{ \lambda x_{n,j} \}$, 
we obtain 
\begin{align*}
	g_n(\lambda) \ \to e^{-\lambda \log  F_{\mathrm{hard}}(\xi) }
\end{align*}
uniformly on every compact subset of $\C$ as $n\to \infty$. Since $g_n$ is an analytic function, Cauchy integral formula implies 
\begin{align*}
	g_n^{(k)}(\lambda) \to \left[-\log F_{\mathrm{hard}} (\xi) \right]^k 
	e^{-\lambda \log F_{\mathrm{hard}}(\xi) }
\end{align*}
uniformly on every compact subset of $\C$. Taking $\lambda = -1$, we prove (\ref{eqn:order}).
\end{proof}

\subsection*{Acknowledgements}

The author thanks Nam-Gyu Kang for helpful advice and discussions. This work was partially supported by the KIAS Individual Grant (MG063103) at Korea Institute for Advanced Study and by the National Research Foundation of Korea Grant funded by the Korea government (2019R1F1A1058006).

\bibliographystyle{habbrv}
\bibliography{bib}

\end{document}